\documentclass[a4paper,12pt]{amsart}

\usepackage{amsthm,amsfonts}
\usepackage{amssymb,graphicx,color}
\usepackage[all]{xy}

\setlength{\topmargin}{-20mm} \setlength{\textwidth}{16cm}
\setlength{\textheight}{248mm} \setlength{\oddsidemargin}{0cm}
\setlength{\evensidemargin}{0mm}

\newtheorem{theorem}{Theorem}[section]
\newtheorem{lemma}[theorem]{Lemma}
\newtheorem{corollary}[theorem]{Corollary}

\newtheorem{definition}[theorem]{Definition}

\newtheorem{remark}[theorem]{Remark}

\newcommand{\R}{\mathbb R}

\newcommand{\N}{\mathbb N}

\begin{document}

\title[]{Finiteness theorem for multi-$\mathcal{K}$-bi-Lipschitz equivalence of map germs}

\author[L. Birbrair]{Lev Birbrair*}\thanks{*Research supported under CNPq 302655/2014-0 grant and by Capes-Cofecub}
\address{Departamento de Matem\'atica, Universidade Federal do Cear\'a
(UFC), Campus do Picici, Bloco 914, Cep. 60455-760. Fortaleza-Ce,
Brasil} \email{birb@ufc.br}

\author[J.C.F. Costa]{Jo\~{a}o Carlos Ferreira Costa**}\thanks{**Research supported by FAPESP and CAPES}
\address{UNESP - C\^ampus de S\~ao Jos\'e do Rio Preto,
Rua Crist\'ov\~ao Colombo, 2265 - Jardim Nazareth 15054-000 S\~ao
Jos\'e do Rio Preto-SP. } \email{jcosta@ibilce.unesp.br }

\author[E.S. Sena Filho]{Edvalter Da Silva Sena Filho***}\thanks{***Research supported by Capes}
\address{Departamento de Matem\'atica, Universidade Estadual Vale do Acara\'u.
(UVA), Avenida Doutor Guarani - at\'e 609/610, Cep. 62042-030. Sobral-Ce,
Brasil} \email{edvalter.filho@hotmail.com}

\author[R. Mendes]{Rodrigo Mendes***}
\address{Departamento de Matem\'atica, Universidade de Integra\c{c}\~ao Internacional da Lusofonia Afro-Brasileira (unilab)
, Campus dos Palmares, Cep. 62785-000. Acarape-Ce,
Brasil} \email{rodrigomendes@unilab.edu.br}

\keywords{Bi-Lipschitz contact equivalence, finiteness theorem,
Lipschitz classification}

\subjclass[2010]{32S15, 32S05}

\begin{abstract}

Let $P^{k}(n,p) $ be the set of all real polynomial map germs $f = (
f_1 , ..., f_p ) : (\mathbb{R}^{n},0) \rightarrow
(\mathbb{R}^{p},0)$ with degree of $f_1 , ...,f_p$ less than or
equal to $ k \in \mathbb{N}$. The main result of this paper shows
that the set of equivalence classes of $ P^{k}(n,p)$, with respect
to multi-$\mathcal{K}$-bi-Lipschitz equivalence, is finite.
\end{abstract}

\maketitle

\section{Introduction}

Given an equivalence relation to classify map germs in the context
of Singularity theory, one initial problem is the following:

\textbf{Problem.} To decide if the classification under
investigation has or not countable number of equivalence classes, 
becoming finite under some restrictions.

If the answer is affirmative, we say that the classification is tame
or has the finiteness property. The finiteness property is an
initial step in any possible attempt of the understanding the classification of 
singularities.

Let $P^{k}(n,p) $ be the set of all real polynomial map germs $f = (
f_1 , ..., f_p ) : (\mathbb{R}^{n},0) \rightarrow
(\mathbb{R}^{p},0)$ with degree of $ f_1 , ..., f_p$ less than or
equal to $ k \in \mathbb{N}$. For some equivalence relations the finitness property does not hold. Henry and Parusinski \cite{HP}
showed existence of moduli for $\mathcal{R}$-bi-Lipschitz
equivalence of analytic functions. By other hand, in \cite{LJAR} the
authors showed that with respect to $\mathcal{K}$-bi-Lipschitz
equivalence, the finiteness property holds for the set of all real
polynomial map germs with bounded degree. Later on, Ruas and Valette
\cite{RV} showed a more general result about the finiteness of
Lipschitz types with respect to $\mathcal{K}$-bi-Lipschitz
equivalence of map germs.

In this paper we introduce the notion of
multi-$\mathcal{K}$-bi-Lipschitz equivalence to investigate the
finiteness property in the set $P^{k}(n,p)$.  This equivalence is closed 
related with the notion of contact equivalence for
$q$-tuple of map germs introduced by Sitta in \cite{sitta}. The
approach of \cite{sitta} was motivated by Dufour's work
\cite{dufour}.

A pair (or couple) of map-germs can be defined as follows:
$$(f,g):(\R^n,0) \rightarrow
(\R^p \times \R^q,0).$$ It can be seen also as a divergent
diagram
$$
(\R^q,0) \stackrel{g}{\longleftarrow} (\R^n,0)
\stackrel{f}{\longrightarrow} (\R^p,0).
$$

Divergent diagrams appear in several geometrical problems and it
have many applications.

Recently, in \cite{LJE} the authors showed the finiteness property
for the called bi-$C^0$-$\mathcal{K}$-equivalence of pairs of map
germs. This equivalence relation is the topological version of
topological contact equivalence adopted to a pair of map germs.  An overview of the theory involving classical equivalence
relations of pairs of map germs can be found in \cite{CPS}.

The main result of this paper shows that the set of equivalence
classes of $ P^{k}(n,p)$, with respect to
multi-$\mathcal{K}$-bi-Lipschitz equivalence, is finite. As a
consequence, we obtain the finiteness property with respect to
$\mathcal{K}$-bi-Lipschitz equivalence.

\section{Preliminaries and notations}

\begin{definition}\label{defKbil} Two continuous map germs
$f,g:(\R^n,0) \to (\R^p,0)$ are said to be
\emph{$\mathcal{K}$-bi-Lipschitz equivalent} if there exist
 germs of bi-Lipschitz homeomorphisms \linebreak $H: (\R^n \times \R^p, 0) \rightarrow (\R^n
\times \R^p, 0)$ and $h:(\R^n, 0) \rightarrow (\R^n, 0)$ such that
$H(\R^n \times \{0 \}^p) = \R^n \times \{0 \}^p$ and the following
diagram is commutative:
\[
\begin{array}{lllll}
(\R^n,0) & \stackrel{(id_n,f)}{\longrightarrow}
 & (\R^n \times
\R^p,0) & \stackrel{\pi_n}{\longrightarrow} & (\R^n,0) \\
\,\,\,h \, \downarrow &  & \,\,\,\, H \, \downarrow  &  & \,\,\, h \, \downarrow \\
(\R^n,0)& \stackrel{(id_n,g)}{\longrightarrow} & (\R^n \times
\R^p,0) & \stackrel{\pi_n}{\longrightarrow}& (\R^n,0) \\
\end{array}
\]
\vspace{0,1cm}

\noindent where  $id_n:(\R^n,0)  \rightarrow (\R^n,0)$ is the
identity map germ of $\R^n$, $\pi_n: (\R^n \times \R^p,0)
\rightarrow (\R^n,0)$ is the canonical projection germ and $\{0\}^p
= (0, \dots,0) \in \R^p$.

When $h = id_{n}$, $f$ and $g$ are said to be
\emph{$\mathcal{C}$-bi-Lipschitz equivalent}.
\end{definition}

In other words, two map germs $f$ and $g$ are
$\mathcal{K}$-bi-Lipschitz equivalent if there exists a germ of
bi-Lipschitz map $H:(\R^n \times \R^p, 0) \rightarrow (\R^n \times
\R^p, 0)$ such that $H(x,y)$ can be written in the form
$H(x,y)=(h(x), \theta(x,y))$, $x \in \R^n, y \in \R^p$, where $h$ is
also a bi-Lipschitz map germ, such that $\theta(x,0)=0$ and $H$ maps
the germ of the graph$(f)$ onto the graph$(g)$. Recall that
graph$(f)$ is the set defined as follows:
$$
{\rm graph}(f) = \{(x,y) \in \R^n \times \R \, | \, y=f(x)\}.
$$

\begin{definition} Two functions $ f, g :\mathbb{R}^{n} \rightarrow \mathbb{R}$ are called \emph{of the same contact at a point}
$ x_0 \in \mathbb{R}^n$ if there exist a neighborhood $ U_{x_0}$ of
$x_0$ in $\mathbb{R}^{n}$ and two positive numbers $c_1$ and $c_2$
such that, for all $ x \in U_{x_0}$, we have

$$ c_1 \ f(x) \leq g(x) \leq c_2 \ f(x).$$

We use the notation: $ f \approx g$.
\end{definition}

The next Lemma is an adaptation of Theorem 2.4 given in \cite{LJAR}:

\begin{lemma}\label{lema1} Let $ f , g : (\mathbb{R}^{n},0) \rightarrow (\mathbb{R},0) $ be two germs of Lipschitz functions.
Suppose that there exists a germ of bi-Lipschitz homeomorphism $ h:
(\mathbb{R}^{n},0) \rightarrow (\mathbb{R}^{n},0) $ such that one of
the following two conditions is true:\vspace{0,15cm}

i) $ f \approx g \circ h $ or \vspace{0,15cm}

ii) $ f \approx - g \circ h $. \vspace{0,15cm}

Then, $f$ and $g$ are $\mathcal{K}$-bi-Lipschitz equivalent.
\end{lemma}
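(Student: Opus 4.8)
The plan is to reduce the statement to the case $h=\mathrm{id}_n$ and then to build the ambient homeomorphism $H$ by hand, fibrewise over $\R^n$. First I would set $\phi:=g\circ h$, which is again a germ of Lipschitz function, and record the following reduction: suppose there is a germ of bi-Lipschitz homeomorphism $H_0\colon(\R^n\times\R,0)\to(\R^n\times\R,0)$ of the form $H_0(x,y)=(x,\theta(x,y))$ with $\theta(x,0)=0$ and $\theta(x,f(x))=\phi(x)$ for all $x$. Then $f$ and $g$ are $\mathcal{K}$-bi-Lipschitz equivalent, realised by the base map $h$ and the ambient map $H(x,y):=(h(x),\theta(x,y))$. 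Indeed $H=G\circ H_0$ for $G(x,y)=(h(x),y)$, which is a germ of bi-Lipschitz homeomorphism because $h$ is; hence so is $H$. Moreover $H(\R^n\times\{0\})=\R^n\times\{0\}$, and a direct check gives $\pi_n\circ H=h\circ\pi_n$ and $H\circ(\mathrm{id}_n,f)=(\mathrm{id}_n,g)\circ h$, using $\theta(x,f(x))=\phi(x)=g(h(x))$; since $h$ is a germ of homeomorphism, $H$ sends $\mathrm{graph}(f)$ onto $\mathrm{graph}(g)$. So it suffices to produce such an $H_0$ from the hypothesis $f\approx\phi$ (case i)) or $f\approx-\phi$ (case ii)).

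For the core construction I would fix a small neighbourhood of $0$ on which the relevant contact inequalities hold, say $c_1 f(x)\le\phi(x)\le c_2 f(x)$ in case i) and the analogous estimate for $-\phi$ in case ii), and then define, for each $x$, the map $\theta(x,\cdot)\colon\R\to\R$ to be the piecewise-affine homeomorphism of $\R$ that fixes $0$, sends $f(x)$ to $\phi(x)$, is linear of slope $\phi(x)/f(x)$ on the segment joining $0$ and $f(x)$, and is affine of slope $+1$ (in case i)) or $-1$ (in case ii)) on each of the two complementary rays; when $f(x)=0$ one has $\phi(x)=0$ and $\theta(x,\cdot)$ is the identity (resp.\ $y\mapsto-y$). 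The contact inequalities guarantee that $|\phi(x)/f(x)|$ stays between $\min(c_1,c_2)$ and $\max(c_1,c_2)$ whenever $f(x)\ne0$, so each $\theta(x,\cdot)$ is bi-Lipschitz with constants independent of $x$, the map $H_0(x,y)=(x,\theta(x,y))$ is a bijective germ fixing $\R^n\times\{0\}$, and it satisfies the graph condition by construction; its inverse has exactly the same shape with the roles of $f$ and $\phi$ interchanged.

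The hard part will be the joint Lipschitz estimate: showing that $(x,y)\mapsto\theta(x,y)$ and its inverse are Lipschitz in \emph{both} variables simultaneously, the delicate points being those $(x_0,y_0)$ with $f(x_0)=0$, where the two breakpoints $0$ and $f(x)$ of the profile collapse together as $x\to x_0$. I would handle this by a case analysis on the position of $y$ and $y'$ relative to the three values $0$, $f(x)$ and $f(x')$, combining the uniform slope bound with the fact that $f$ and $\phi$ are Lipschitz (so $|f(x)-f(x')|$ and $|\phi(x)-\phi(x')|$ are controlled by $|x-x'|$) to obtain $|\theta(x,y)-\theta(x',y')|\le C(|x-x'|+|y-y'|)$ with $C$ depending only on the Lipschitz constants of $f$ and $\phi$ and on $c_1,c_2$; the same argument applied to $H_0^{-1}$ finishes the proof. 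This is, in essence, the argument of Theorem~2.4 of \cite{LJAR}, adapted here so as to keep track of the sign (the $\pm$ in the hypothesis) and of the outer conjugation by $h$.
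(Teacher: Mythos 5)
Your proposal is correct and follows essentially the same route as the paper: both build the ambient map fibrewise as a piecewise-affine profile with slope $g\circ h(x)/f(x)$ near $0$ (bounded by the contact hypothesis) and slope $\pm1$ on the complementary rays, with the identity on fibres where $f$ vanishes. The only differences are cosmetic — you factor out the base map $h$ first and propose to verify the joint Lipschitz bound by direct case analysis, whereas the paper bounds the partial derivatives almost everywhere and extends by continuity.
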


\begin{proof} Suppose $f \approx g \circ h$ (the case $f \approx - g \circ h$ is analogous).
Let $ H : (\mathbb{R}^{n} \times \R,0) \rightarrow (\mathbb{R}^{n}
\times \R,0)$ given by \vspace{0,5cm}

$H(x,y) =$
$\left \{
\begin{array}{ll}
( h(x),  0 ) & {\rm if} \,\,\, y = 0  \\
( h(x),  \frac{ {g} \circ h(x) y }{ f (x) } ) & {\rm if} \,\,\,  0 <
\vert y
\vert \leq \vert f(x) \vert \\
( h(x), y - f(x) + g \circ h (x))  & {\rm if} \,\,\,  0 < \vert f(x) \vert \leq \vert y \vert \,\,\,\,\, \mbox{and} \,\,\,\,\, sign(y)= sign(f(x))\\
( h(x), y + f(x) - g \circ h (x))  & {\rm if} \,\,\,  0 < \vert f(x)
\vert
\leq \vert y \vert \,\,\,\,\, \mbox{and} \,\,\,\,\, sign(y)= -sign(f(x))\\
( h(x),  y )  & {\rm if} \,\,\,  f(x) = 0. \\
 \end{array}
 \right.
$
\vspace{0,5cm}

The map $H(x,y) = (h(x),\theta(x,y) )$ defined above is
bi-Lipschitz. In fact, $H$ is injective because, for any fixed
$x^{*}$, we can show that $\theta( x^{*}, y )$ is a continuous and
monotone function. Moreover, $H$ is Lipschitz if $ 0 < \vert f(x)
\vert \leq \vert y \vert.$ Let us show that $H$ is Lipschitz if $0 <
\vert y \vert \leq \vert f(x) \vert.$ Hence it is sufficient to show
that all the partial derivatives of $H$ exist and are bounded in
their domain up to a set of measure zero. Since $h$ is a
bi-Lipschitz homeomorphism, follows that all its partial derivatives
$\displaystyle \frac{\partial{h} }{
\partial x_i } $ exist and are bounded in almost every $x$ near $0
\in \mathbb{R}^n$, $i=1, \dots,n$. Thus, it is necessary to check
only the partial derivatives of $\theta$. Then,

$$
\left.
\begin{array}{lll}
\displaystyle \frac{ \partial{\theta} }{ \partial x_i } & = & \displaystyle \frac{ \left({\sum}_{ _{j = 1}}^{n} \frac{ \partial g}{ \partial x_j }(h(x))  \frac{ \partial h_j }{ \partial x_i } (x)f(x) - \frac{ \partial f}{ \partial x_i }(x) g \circ h(x)\right)y}{  {(f(x))}^{2} }\\
 & & \\


 & = &  \displaystyle {\sum}_{ _{j = 1}}^{n} \frac{ \partial g}{ \partial x_j }(h(x)) \frac{ \partial h_j }{ \partial x_i } (x) \frac{y}{f(x)} -  \frac{ \partial f}{ \partial x_i }(x) \frac{ g \circ h(x)}{f(x)} \frac{y}{f(x)}. \\
 & & \\
 \end{array}
 \right.
$$

Observe that,

\medskip

i) $ \displaystyle \frac{ \partial h_j }{ \partial x_i } (x)$ is
bounded up to a set of measure zero, for all $ \ i, j = 1,..., n $;

ii) since $ 0 < \vert y \vert \leq \vert f(x) \vert $ then
$\displaystyle \frac{y}{f(x)}$ is bounded;

iii) since $f \approx g \circ h$, the expression $\frac{ g \circ
h(x)}{f(x)}$ is bounded;

iv) $\displaystyle \frac{\partial g}{ \partial x_j }, \frac{
\partial f}{\partial x_i }$ are bounded for all $ i,j = 1,..., n, $
because $f$ and $g$ are Lipschitz.

\medskip

From (i)-(iv)  we can conclude the $\displaystyle
\frac{\partial{\theta} }{
\partial x_i }$ exist and it is bounded up to a set of measure zero, for all $i=1, \dots,
n$. Observe that the partial derivative of $\theta$ with respect
to $y \in \R$ is also bounded.

Since the map $H$ is Lipschitz outside of the set of the measure zero,
the following expression holds:
$$ \parallel H(x,y) - H(u,v) \parallel \leq k \parallel (x,y) -
(u,v) \parallel \,\,\,\,\,\,\, \forall \ (x,y) , (u,v) \in  V
\setminus U, $$

\noindent where $V$ is an open neighborhood of the origin in
$(\mathbb{R}^{n} \times \mathbb{R},0)$, $U$ is a set of measure
zero and $k$ is a real constant positive. We need to show that
the last inequality remains valid for all $(x,y) , (u,v) \in V$. In
fact, take $ ( x_0 , y_0 ) , ( u_0 , v_0 ) \in V \cap U.$ Since $U$
is a set of measure zero, there exist sequences $( x_n , y_n ), (
u_m , v_m ) \in V \setminus U$ such that $( x_n , y_n ) \rightarrow
( x_0 , y_0 )$ e $( u_m , v_m ) \rightarrow ( u_0 , v_0 )$.

Moreover,
$$\parallel H( x_n , y_n ) - H( u_m , v_m ) \parallel \leq k
\parallel ( x_n , y_n ) - ( u_m , v_m ) \parallel, \,\,\,\,\,\,\, \forall \ n , m
\in \mathbb{N}.$$

Since $H$ is a continuous map, taking  $n \rightarrow \infty $ and
then $ m \rightarrow \infty$ follows that
$$ \parallel H( x_0 , y_0 ) - H( u_0 , v_0 ) \parallel \leq k
\parallel ( x_0 , y_0 ) - ( u_0 , v_0 )
\parallel.$$

Hence, $H$ is Lipschitz in all $V$.

Since $H^{-1}$ can be constructed in the same form as $H$, we
conclude that  $H^{-1}$ is also Lipschitz. Then, $H$ is
bi-Lipschitz. Moreover, by construction of $H$, follows that:

i) $H(x, f(x) ) = (h(x), g \circ h(x))$ and \vspace{0,2cm}

ii) $H( \mathbb{R}^n \times \{ 0 \}) = \mathbb{R}^n \times \{ 0 \}$.
\vspace{0,2cm}

Hence $f$ and $g$ are $\mathcal{K}$-bi-Lipschitz equivalent.
\end{proof}

\begin{definition} Let $q = (q_1, q_2, ... ,q_p ) \in \mathbb{N}^p $. A \emph{$(p,q)$-multi pair of map germs} is a family of
$p$ maps $\{F_1, \dots, F_p\}$, where $F_i : ( \mathbb{R}^n ,0)
\rightarrow ( \mathbb{R}^{q_i} ,0)$, $i=1, \dots, p$ is a  map germ.
A $(p,q)$-multi pair of map germs can be considered as a map germ
$F= ( F_1 , ... , F_p ):(\R^n,0) \to (\R^{q_1} \times \R^{q_2}
\times \dots \times \R^{q_p},0)$, where $q$ is called  \emph{multi
index}  of a $(p,q)$-multi pair of map germs.
\end{definition}

\begin{remark}\label{remark} A map germ $f:(\R^n,0) \to (\R^p,0)$ can be considered as a $(p,q)$-multi pair of
function germs, i.e., take $q=(1, \dots,1)$ $p$-times.
\end{remark}

\begin{definition}\label{defcontactfamily} A family of $p+1$  germs of bi-Lipschitz homeomorphisms of type  $( h, H_1 , ... , H_p ),$
where $ h : ( \mathbb{R}^n ,0) \rightarrow ( \mathbb{R}^n ,0)$ and $
H_i : ( \mathbb{R}^n \times \mathbb{R}^{q_i} , 0) \rightarrow (
\mathbb{R}^n \times \mathbb{R}^{q_i} , 0)$ is called a \emph{contact
family} if $H_i (x, y_i ) = ( h(x), \tilde{H_i} (x, y_i ))$ for all
$i = 1, ..., p$, $x \in \R^n$, $y_i \in \mathbb{R}^{q_i}$.
\end{definition}

\begin{definition}\label{defH} Let $ q = ( q_1 , ... , q_p )$ be a multi index. A \emph{$(p,q)$-multi pair of bi-Lipschitz homeomorphisms} is a germ of a bi-Lipschitz homeomorphism,
generated by a contact family of $p+1$ bi-Lipschitz homeomorphisms
in the form
$$\mathcal{H} : ( \mathbb{R}^{n} \times \mathbb{R}^{q_1}
\times ... \times \mathbb{R}^{q_p}  ,0) \rightarrow ( \mathbb{R}^{n}
\times \mathbb{R}^{q_1} \times ... \times \mathbb{R}^{q_p}  ,0),$$
\noindent given by
$$\mathcal{H} ( x, y_1 , ... , y_p ) = ( h(x),
\tilde{H_1}(x, y_1 ), ..., \tilde{H_p}(x, y_p ))$$ \noindent where
$\tilde{H_i}$, $i=1, \dots, p$, are as in Definition
\ref{defcontactfamily}.

We call the bi-Lipschitz map germ $h$ of \emph{a common factor of
the $(p, q)$-multi pair of bi-Lipschitz homeomorphisms}.
\end{definition}

\begin{definition} Let  $q = ( q_1 , ... , q_p )$ be a multi index. Two $(p,q)$-multi pairs of map germs $F, G : ( \mathbb{R}^n ,0) \rightarrow
( \mathbb{R}^{q_1} \times ... \times \mathbb{R}^{q_p} , 0)$ are said
to be {\emph multi-$\mathcal{K}$-bi-Lipschitz equivalent} if there
exists a $(p,q)$-multi pair of bi-Lipschitz homeomorphism
$\mathcal{H} : ( \mathbb{R}^{n} \times \mathbb{R}^{q_1} \times ...
\times \mathbb{R}^{q_p}  ,0) \rightarrow \linebreak ( \mathbb{R}^{n}
\times \mathbb{R}^{q_1} \times ... \times \mathbb{R}^{q_p}  ,0)$ as
in Definition \ref{defH}, such that the following diagram is
commutative:
$$
\xymatrix{
( \mathbb{R}^{n},0) \ar[d]_h & \ar[r]^{({id}_{n} , F )} & & ( \mathbb{R}^n \times \mathbb{R}^{q_1} \times ... \times \mathbb{R}^{q_p} ,0 ) \ar[d]_{\mathcal{H}} & \ar[r]^{ {\pi}_{n}  } & & ( \mathbb{R}^{n},0) \ar[d]_h \\
( \mathbb{R}^{n},0) & \ar[r]_{({id}_{n} , G)} & & ( \mathbb{R}^n
\times \mathbb{R}^{q_1} \times ... \times \mathbb{R}^{q_p} ,0 ) &
\ar[r]_{{\pi}_{n}} & & ( \mathbb{R}^{n},0) }
$$  \\
where $id_n$ is the identity map germ of $\R^n$; $\pi_n$ is the
usual projection in $\R^n$; $h$ is the common factor of
$(p,q)$-multi pair of bi-Lipschitz homeomorphisms and for all $i =
1, ... ,p$ we have $\tilde{H}_i (\R^n \times {\{ 0 \} }^{q_i} ) =
\R^n \times {\{ 0 \} }^{q_i},$
\end{definition}

\begin{remark} Let $q = (q_1 , ...,q_p )$ be the multi index and  let $F = ( F_1 , ..., F_p ), G= ( G_1 , ... , G_p ):  ( \mathbb{R}^n ,0) \rightarrow
( \mathbb{R}^{q_1} \times ... \times \mathbb{R}^{q_p} , 0)$ be two
$(p,q)$-multi pairs of map germs. Then: \vspace{0,2cm}

i) When $q= q_1 \in \N$, the definition of
multi-$\mathcal{K}$-bi-Lipschitz equivalence coincides with the
definition of $\mathcal{K}$-bi-Lipschitz equivalence. \vspace{0,2cm}

ii) If  $F$ and $G$ are multi-$\mathcal{K}$-bi-Lipschitz equivalent
(as multi pair of map germs) then $F$ and $G$ are
$\mathcal{K}$-bi-Lipschitz equivalent (as map germs).\vspace{0,2cm}

2. If  $F$ and $G$ are multi-$\mathcal{K}$-bi-Lipschitz equivalent,
then the map germs $F_i$ and $G_i$ are $\mathcal{K}$-bi-Lipschitz
equivalent, for all $ i = 1 , ..., p.$
\end{remark}

\section{Main results and finiteness property}

\begin{theorem}\label{Teo123} Let $f = ( f_1 ,..., f_p ), g = ( g_1 ,..., g_p ) : ( \mathbb{R}^n ,0) \rightarrow ( \mathbb{R}^p ,0)$ be two
germs of Lipschitz maps.  Suppose  there exists a bi-Lipschitz
homeomorphism $H : ( \mathbb{R}^n \times \mathbb{R}^p , 0)
\rightarrow ( \mathbb{R}^n \times \mathbb{R}^p ,0)$ such
that:\vspace{0,15cm}

i) The sets \ $\mathbb{R}^n \times { \{ 0 \} }^{p} , \mathbb{R}^n
\times \mathbb{R}^{p-1} \times  \{ 0 \} , \mathbb{R}^n \times
\mathbb{R}^{p-2} \times  \{ 0 \} \times \mathbb{R} , ... ,
\mathbb{R}^n \times  \{ 0 \} \times  \mathbb{R}^{p-1}$ are invariant
under $H$. In other words, $H$ satisfies

$H(\mathbb{R}^n \times { \{ 0 \} }^{p}) = \mathbb{R}^n \times { \{ 0
\} }^{p}, H(\mathbb{R}^n \times \mathbb{R}^{p-1} \times  \{ 0 \}) =
\mathbb{R}^n \times \mathbb{R}^{p-1} \times  \{ 0 \}, \dots,
\linebreak  H(\mathbb{R}^n \times  \{ 0 \} \times
\mathbb{R}^{p-1})=\mathbb{R}^n \times  \{ 0 \} \times
\mathbb{R}^{p-1}.$

\smallskip

ii) $H ( graph (f) ) = graph(g)$. \vspace{0,15cm}

Then, the germs  $f$ and $g$ are
multi-$\mathcal{K}$-bi-Lipschitz-equivalent.

\end{theorem}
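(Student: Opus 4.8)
\medskip
\noindent\textbf{Proof strategy.}\quad The idea is to split the equivalence into its $p$ coordinates, handle each coordinate by Lemma~\ref{lema1}, and glue the pieces back together with one common factor. The first task is to produce that common factor. Since $f$ and $g$ are Lipschitz, the graph maps $x\mapsto(x,f(x))$ and $y\mapsto(y,g(y))$ are bi-Lipschitz homeomorphisms onto $\mathrm{graph}(f)$ and $\mathrm{graph}(g)$; by hypothesis (ii) $H$ carries the first onto the second, so
\[ h:=\pi_n\circ H\circ(id_n,f):(\R^n,0)\longrightarrow(\R^n,0) \]
is a germ of bi-Lipschitz homeomorphism and $H(x,f(x))=(h(x),g(h(x)))$ for $x$ near $0$.

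Next I would compare $f_i$ with $g_i\circ h$ for each $i$. Write $W_i=\R^n\times\{y\in\R^p:y_i=0\}$; this is exactly one of the coordinate subspaces in hypothesis (i), so $H(W_i)=W_i$. A bi-Lipschitz map that preserves a hyperplane setwise distorts the distance to that hyperplane only by bounded factors, hence $\mathrm{dist}(H(z),W_i)$ and $\mathrm{dist}(z,W_i)$ are comparable, uniformly for $z$ near $0$. Evaluating at $z=(x,f(x))$ and using $\mathrm{dist}((x,f(x)),W_i)=|f_i(x)|$ together with $\mathrm{dist}((h(x),g(h(x))),W_i)=|g_i(h(x))|$, we obtain that $|f_i(x)|$ and $|g_i(h(x))|$ are comparable for $x$ near $0$ (in particular the zero sets of $f_i$ and $g_i\circ h$ coincide). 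To fix the sign, note that $H$ preserves $W_i$ and therefore either preserves or interchanges the two half-spaces $\{y_i>0\}$ and $\{y_i<0\}$; in the first case $f_i$ and $g_i\circ h$ have the same sign wherever they are nonzero, in the second they have opposite signs. In either case one of $f_i\approx g_i\circ h$, $f_i\approx -g_i\circ h$ holds, and the same $h$ serves for every $i=1,\dots,p$.

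Now I would assemble the answer. By the construction in the proof of Lemma~\ref{lema1}, for each $i$ there is a bi-Lipschitz homeomorphism $H_i(x,y_i)=(h(x),\theta_i(x,y_i))$ of $(\R^n\times\R,0)$ with $\theta_i(x,0)=0$, $\theta_i(x,f_i(x))=g_i(h(x))$, and whose inverse has the same triangular form. Set
\[ \mathcal{H}(x,y_1,\dots,y_p)=(h(x),\theta_1(x,y_1),\dots,\theta_p(x,y_p)). \]
Viewing a map germ into $\R^p$ as a $(p,q)$-multi pair with $q=(1,\dots,1)$ (Remark~\ref{remark}), $\mathcal{H}$ is a $(p,q)$-multi pair of bi-Lipschitz homeomorphisms with common factor $h$ in the sense of Definition~\ref{defH}: it is Lipschitz because $h$ and each $\theta_i$ are, it is injective because $h$ and each $\theta_i(x,\cdot)$ are, and it is onto a neighbourhood of $0$ with Lipschitz inverse $(u,v_1,\dots,v_p)\mapsto(h^{-1}(u),\psi_1(u,v_1),\dots,\psi_p(u,v_p))$, the $\psi_i$ coming from $H_i^{-1}$, by the same reasoning. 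Finally $\mathcal{H}(\R^n\times\{0\}^p)=\R^n\times\{0\}^p$ because $\theta_i(x,0)=0$, and $\mathcal{H}(x,f(x))=(h(x),g(h(x)))$; so the diagram in the definition of multi-$\mathcal{K}$-bi-Lipschitz equivalence commutes, and $f$ and $g$ are multi-$\mathcal{K}$-bi-Lipschitz equivalent.

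The step I expect to be the real obstacle is the middle one — extracting, from the single homeomorphism $H$ of $\R^n\times\R^p$, the relations $f_i\approx\pm\,g_i\circ h$ for one common $h$. What makes it go through is recognizing that the flag in hypothesis (i) is precisely the list of coordinate hyperplanes $\{y_i=0\}$, so that each is $H$-invariant, and that invariance of a hyperplane under a bi-Lipschitz map is exactly what forces the comparability of $|f_i|$ and $|g_i\circ h|$ required to apply Lemma~\ref{lema1}. The sign bookkeeping (preserving versus swapping each pair of half-spaces) is the only other delicate point; the final gluing is routine.
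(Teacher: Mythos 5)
Your proposal is correct and follows essentially the same route as the paper: extract the common factor $h=\pi_n\circ H\circ(id_n,f)$, use the $H$-invariance of each hyperplane $\{y_i=0\}$ to get $|f_i|\approx|g_i\circ h|$ and the half-space dichotomy to fix the sign, then apply Lemma~\ref{lema1} coordinatewise and assemble the triangular map $\mathcal{H}$. The only (immaterial) difference is that you phrase the comparability step via distance to the invariant hyperplane, where the paper compares $H(x,f(x))$ with the image of the auxiliary point $(x,0,f_2(x),\dots,f_p(x))$ — the same estimate in slightly different clothing.
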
 \vspace{0,2cm}

To prove the Theorem \ref{Teo123} we need some preliminary results.

Let $f = ( f_1 ,..., f_p ), g = ( g_1 ,..., g_p ) : ( \mathbb{R}^n
,0) \rightarrow ( \mathbb{R}^p ,0)$ be two germs of Lipschitz maps
and suppose there exists a germ of bi-Lipschitz homeomorphism $H:
(\mathbb{R}^{n} \times \R^p,0) \rightarrow (\mathbb{R}^{n} \times
\R^p ,0)$, satisfying the conditions of Theorem \ref{Teo123}. Define
the following subsets of $\R^n \times \R^p$: \vspace{0,2cm}

$ V_{k}^{+} = \{ (x, y_1 , ... , y_p ) \in \mathbb{R}^n \times
\mathbb{R}^p \mid y_k > 0 \}$ \ \ and \vspace{0,2cm}

$ V_{k}^{-} = \{ (x, y_1 , ... , y_p ) \in \mathbb{R}^n \times
\mathbb{R}^p \mid y_k < 0 \}$, $k=1, \dots,p$.

\medskip

\textbf{Assertion 1.}\label{Af4} For each $k=1, \dots,p$, one of the
following conditions holds: \vspace{0,2cm}

i) $H( V_{k}^{+} ) = V_{k}^{+}$ \ \ and \ \ $H( V_{k}^{-} ) =
V_{k}^{-}$ or \vspace{0,2cm}

ii) $H( V_{k}^{+} ) = V_{k}^{-}$ \ \ and \ \ $H( V_{k}^{-} ) =
V_{k}^{+}$,

\begin{proof}If the condition does not hold, there are points $a, b, c, d \in \mathbb{R}^{n} \times \mathbb{R}^p$, such that $a \in  V_{k}^{+}$, $b, c, d \in  V_{k}^{-}$
with  $ H(a) = b, H(c) = d$. Consider a path $\lambda : [ 0,1]
\rightarrow V_{k}^{-}$ connecting the points $b$ and $d$. Therefore,
$H^{-1} \circ \lambda$ is a path in $\mathbb{R}^n \times
\mathbb{R}^{p}$, connecting the points $a$ and $c$  passing by
$\mathbb{R}^{n} \times \mathbb{R}^{k-1} \times \{ 0 \} \times
\mathbb{R}^{p-k}$. However, this is not possible because $H(
\mathbb{R}^{n} \times \mathbb{R}^{k-1} \times \{ 0 \} \times
\mathbb{R}^{p-k}) = \mathbb{R}^{n} \times \mathbb{R}^{k-1} \times \{
0 \} \times \mathbb{R}^{p-k}$ and $H(H^{-1} \circ \lambda)=\lambda \subset V_k^{-}$.
\end{proof}

\medskip

\textbf{Assertion 2.} Let $ h : (\mathbb{R}^{n},0) \rightarrow
(\mathbb{R}^{n} ,0)$ be defined by $ h(x) = {\pi}_{n} ( H(x, f (x) )
)$. Then $h$ is a bi-Lipschitz map germ.

\begin{proof}  Since $g$ is a Lipschitz map,
the projection $ {{\pi}_{n} }_{ {\mid}_{graf(g)} }$ is a
bi-Lipschitz map. By the same argument, a map $ x \mapsto ( x, f(x)
)$ is bi-Lipschiz. By hypothesis, the map $H$ is bi-Lipschitz. Hence
$h$ is bi-Lipschitz.
\end{proof}

\medskip

\textbf{Assertion 3.} One of the following assertions is true:

\medskip

i) $ f_i (x) \approx  \ g_i \circ h(x)$ or

ii) $ f_i (x) \approx - \ g_i \circ h(x)$, for all $i = 1, ..., p.$

\medskip

\begin{proof} Since $H$ is bi-Lipschitz, there exists two positives numbers  $c_1$ and $c_2$, such that \vspace{0,2cm}

$$ c_1 \vert f_1 (x) \vert  \leq  \parallel H(x, f_1 (x), ..., f_p (x)
) - H(x, 0 , f_2 (x), ..., f_p (x)  ) \parallel  \leq c_2 \vert f_1
(x) \vert.$$

By above construction,

$$\parallel H(x, f_1 (x), f_2 (x), ..., f_p (x) ) - H(x, 0 , f_2 (x), ... , f_p (x) ) \parallel =$$

$$= \parallel ( h(x), \ g_1 \circ h(x) \ , ...,  \ g_p \circ h(x) )
- H(x, 0 , f_2 (x), ... , f_p (x) ) \parallel \geq \ \vert g_1 \circ
h(x) \vert .$$

Therefore, $\vert g_1 \circ h(x) \vert \leq c_2 \vert f_1 (x)
\vert.$ Using the same procedure for the map $H^{-1}$, we obtain
that
$$ \vert g_1 \circ h(x) \vert \ \geq
\tilde{c}_1 \vert f_1 (x) \vert,$$ \noindent where $\tilde{c}_1$ is
a a real positive number.

Then, $ \tilde{c}_1 \vert f_1 (x) \mid  \ \leq \ \vert g_1 \circ
h(x) \vert \ \leq \ c_2 \vert f_1 (x) \vert .$

By Assertion \ref{Af4}  we have that for all $x \in \R^n$,
$$ sign ( {f}_{1}(x)) = sign ( {g}_{1} \circ h(x)) \,\,\,\,\, {\rm or} \,\,\,\,\,\, sign( {f}_{1}(x)) = - sinal( {g}_{1} \circ
h(x)).$$

Therefore, \ $f_1 (x) \approx \ g_1 \circ h(x) \  \mbox{ or } \  f_1
(x) \approx \ - g_1 \circ h(x)$.

Repeating the same process for all $i=1, \dots, p$, we obtain
$$f_i (x) \approx \ g_i \circ h(x) \ \mbox{ or }  \ f_i (x) \approx \ - g_i \circ h(x).$$
\end{proof} \vspace{0,2cm}

\begin{proof} [Proof of Theorem \ref{Teo123}] By Assertion 3 and Lemma \ref{lema1}, follows that  $f_i $ and  $g_i$ are $\mathcal{K}$-bi-Lipschitz equivalent,
for all $i = 1, ..., p.$ Then, for each $i=1, \dots, p$, there exist
a germ of bi-Lipschitz homeomorphisms $H_i : (\mathbb{R}^{n} \times
\mathbb{R},0) \rightarrow (\mathbb{R}^{n} \times \mathbb{R} ,0)$,
such that \vspace{0,2cm}

$i) \ H_i (x, y_i ) = ( h(x) , \tilde{H}_i (x, y_i ) ),$ with
$\tilde{H}_i:(\R^n \times \R,0) \to (\R,0)$, $i = 1, ..., p.$
\vspace{0,2cm}

$ii) \ H_i (x, f_i(x) ) = ( h(x) , g_i \circ h(x) )$, $i = 1, ...,
p.$ \vspace{0,2cm}

$iii) \ H_i ( \mathbb{R}^{n} \times \{ 0 \}  ) = \mathbb{R}^{n}
\times \{ 0 \} $, $i = 1, ..., p.$ \vspace{0,2cm}

Define the map $ \mathcal{H} : (\mathbb{R}^{n} \times \R^p,0)
\rightarrow (\mathbb{R}^{n} \times \R^p ,0)$ given by \vspace{0,2cm}

$$ \mathcal{H}(x, y_1 , ... , y_p ) = ( h(x) , \tilde{H}_1 (x, y_1 ), ... , \tilde{H}_p (x, y_p
)).$$

Considering $q=(1,\dots,1)$ $p$-times, we obtain that
$\mathcal{H}$ is a $(p, q)$-multi pair of bi-Lipschitz homomorphism,
generated by the contact family of bi-Lipschitz homeomorphisms, $\{
h, H_1 , ..., H_p \}$. Moreover, \vspace{0,2cm}

$i) \ \mathcal{H}(x, f_1 (x) , ... , f_p (x) ) = ( h(x) , g_1 \circ
h(x), ... , g_p \circ h(x) )$ and \vspace{0,2cm}

$ii) \ \mathcal{H} ( \mathbb{R}^{n} \times { \{ 0 \}}^{p}  ) =
\mathbb{R}^{n} \times { \{ 0 \}}^{p} $.\vspace{0,2cm}

Hence, $f$  and  $g$ are multi-$\mathcal{K}$-bi-Lipschitz
equivalent.
\end{proof}

\begin{theorem}\label{mainthm}(Finiteness theorem) Let $P^{k}(n,p) $ be the set of all real
polynomial map germs $ f = ( f_1 ,..., f_p ) :(\mathbb{R}^{n},0)
\rightarrow (\mathbb{R}^{p},0), $ with degree of $ f_1 ,..., f_p$
less than or equal to $k\in\N$. Then the set of the equivalence
classes of $P^{k}(n,p) $, with respect to
multi-$\mathcal{K}$-bi-Lipschitz-equivalence is finite.
\end{theorem}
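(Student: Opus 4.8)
The plan is to realize $P^{k}(n,p)$ as a semialgebraic family parametrized by a finite-dimensional vector space and to invoke bi-Lipschitz triviality of semialgebraic families, in the refined form that also preserves a prescribed finite list of semialgebraic subsets, and then to feed the resulting trivializations into Theorem \ref{Teo123}. First I would identify $P^{k}(n,p)$ with $\R^{N}$, $N=N(n,p,k)$: a germ $f=(f_{1},\dots,f_{p})\in P^{k}(n,p)$ is determined by the finitely many coefficients of the polynomials $f_{1},\dots,f_{p}$ (all with zero constant term). Write $f_{t}$ for the germ with coefficient vector $t\in\R^{N}$, and consider in $\R^{N}\times(\R^{n}\times\R^{p})$ the semialgebraic sets
$$\mathcal{G}=\{(t,x,y): y_{i}=f_{t,i}(x),\ i=1,\dots,p\}\quad\text{and}\quad\mathcal{V}_{k}=\R^{N}\times\R^{n}\times\{y_{k}=0\},\ k=1,\dots,p,$$
the first being the family of graphs and the $\mathcal{V}_{k}$ being \emph{constant} in $t$. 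Note that $\bigcap_{k}\mathcal{V}_{k}=\R^{N}\times\R^{n}\times\{0\}^{p}$, and that the list $\R^{n}\times\{0\}^{p},\ \R^{n}\times\R^{p-1}\times\{0\},\dots,\R^{n}\times\{0\}\times\R^{p-1}$ occurring in the hypothesis of Theorem \ref{Teo123} is exactly $\R^{n}\times\{0\}^{p}$ together with the $p$ coordinate hyperplanes $\{y_{k}=0\}$.

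The core of the argument is the following triviality statement, which is the one used by Ruas and Valette \cite{RV} for $\mathcal{K}$-bi-Lipschitz equivalence, now applied to the enlarged list $\mathcal{G},\mathcal{V}_{1},\dots,\mathcal{V}_{p}$: there is a finite semialgebraic partition $\R^{N}=S_{1}\sqcup\cdots\sqcup S_{r}$ such that, for each $j$ and all $t,t'\in S_{j}$, there is a germ of bi-Lipschitz homeomorphism $H=H_{t,t'}:(\R^{n}\times\R^{p},0)\to(\R^{n}\times\R^{p},0)$ with $H(\operatorname{graph}(f_{t}))=\operatorname{graph}(f_{t'})$ and $H(\R^{n}\times\{y_{k}=0\})=\R^{n}\times\{y_{k}=0\}$ for every $k$, hence also $H(\R^{n}\times\{0\}^{p})=\R^{n}\times\{0\}^{p}$. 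I would obtain this by choosing a Lipschitz stratification of the total family compatible with $\mathcal{G}$ and with all the $\mathcal{V}_{k}$ and trivializing over the (finitely many) strata of the parameter space, as in the work of Mostowski, Parusi\'nski and Valette; the passage from a trivialization on a small ball to a germ at the origin is routine, since every set involved contains $0$ and the configuration is locally conical there. This is the step I expect to be the main obstacle: bi-Lipschitz triviality of the family of graphs alone is already contained in \cite{RV}, and the real point is to produce a \emph{single} trivialization that simultaneously preserves all the fixed coordinate hyperplanes and descends to germs.

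With this in hand the theorem follows at once. For $t,t'$ in a common piece $S_{j}$ the homeomorphism $H_{t,t'}$ satisfies precisely hypotheses (i) and (ii) of Theorem \ref{Teo123} for $f=f_{t}$, $g=f_{t'}$, so $f_{t}$ and $f_{t'}$ are multi-$\mathcal{K}$-bi-Lipschitz equivalent; hence $P^{k}(n,p)$ has at most $r<\infty$ multi-$\mathcal{K}$-bi-Lipschitz equivalence classes. Since multi-$\mathcal{K}$-bi-Lipschitz equivalence refines $\mathcal{K}$-bi-Lipschitz equivalence, this also yields the finiteness of the number of $\mathcal{K}$-bi-Lipschitz classes in $P^{k}(n,p)$.
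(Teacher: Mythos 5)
Your proposal is correct and is essentially the paper's own argument: the paper likewise attaches to each $F$ the collection consisting of $\operatorname{graph}(F)$ together with the fixed coordinate subspaces, invokes Valette's bi-Lipschitz version of Hardt's theorem \cite{V} to get finitely many classes under the resulting ``multi-$\mathcal{V}$'' equivalence over a finite semialgebraic partition of the coefficient space, and then applies Theorem \ref{Teo123}. The only difference is one of emphasis: the step you flag as the main obstacle (a single trivialization preserving all the hyperplanes and descending to germs) is exactly what the paper takes directly from \cite{V} without further comment.
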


\begin{proof} Let $F:(\R^n,0) \to (\R^p,0) \in  P^{k}(n,p) $. We associate to $F$ the following family of algebraic subsets:
$$
X_{F} = \{ \ \mathbb{R}^n \times {\{ 0 \}}^p  \ , \  \mathbb{R}^n
\times \mathbb{R}^{p-1} \times \{ 0 \}  \ , \  \mathbb{R}^n \times
\mathbb{R}^{p-2} \times \{ 0 \} \times \mathbb{R} \ ,  \ldots  , \
\mathbb{R}^n \times \{ 0 \} \times \mathbb{R}^{p-1} \ , \ graf(F)
\}.
$$

Consider the set of all the families of algebraic subsets defined as
above, that is,
$$
\mathcal{F} = \{X_{F_1} \ , \ X_{F_2}, \dots,  \ X_{F_i}, \dots\}.
$$

We define the following equivalence relation in $\mathcal{F}$:
\medskip

$X_{F_i}$ and $X_{F_j}$  are called multi-$\mathcal{V}$-bi-Lipschitz
equivalent if there exists  a bi-Lipschitz homeomorphism $H : (
\mathbb{R}^n \times \mathbb{R}^p , 0) \rightarrow ( \mathbb{R}^n
\times \mathbb{R}^p ,0)$ satisfying the conditions i) and ii) of
Theorem \ref{Teo123}.

By Vallete  Lipschitz Triviality Theorem (cf. \cite{V}), the number
of equivalence classes  with respect to
multi-$\mathcal{V}$-bi-Lipschitz equivalence, is finite. Applying
the Theorem \ref{Teo123} follows that  $F_i$ and $F_j$ are
multi-$\mathcal{K}$-bi-Lipschitz equivalent and so we conclude the
proof of theorem.
\end{proof}

\begin{corollary} The set of the equivalence classes of
$P^{k}(n,p)$, with respect to $\mathcal{K}$-bi-Lipschitz
equivalence, is finite.
\end{corollary}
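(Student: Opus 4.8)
The plan is to reduce the Finiteness Theorem entirely to the Valette Lipschitz Triviality Theorem, using Theorem~\ref{Teo123} as the bridge that converts a bi-Lipschitz trivialization of the associated algebraic data into a multi-$\mathcal{K}$-bi-Lipschitz equivalence of the map germs. First I would attach to each $F\in P^{k}(n,p)$ the finite family of algebraic subsets
$$
X_{F}=\{\,\R^n\times\{0\}^p,\ \R^n\times\R^{p-1}\times\{0\},\ \ldots,\ \R^n\times\{0\}\times\R^{p-1},\ \mathrm{graph}(F)\,\},
$$
noting that each member is an algebraic (indeed polynomial) subset of $\R^n\times\R^p$ whose defining equations have degree bounded in terms of $k$; the only non-trivial member is $\mathrm{graph}(F)$, cut out by $y_j-f_j(x)=0$. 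I would then declare $X_{F_i}$ and $X_{F_j}$ \emph{multi-$\mathcal{V}$-bi-Lipschitz equivalent} when there is a germ of bi-Lipschitz homeomorphism $H:(\R^n\times\R^p,0)\to(\R^n\times\R^p,0)$ carrying each flag subspace of $X_{F_i}$ onto the corresponding one of $X_{F_j}$ and $\mathrm{graph}(F_i)$ onto $\mathrm{graph}(F_j)$ --- that is, exactly conditions (i) and (ii) of Theorem~\ref{Teo123}.

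Next I would invoke the Valette finiteness/triviality theorem for bi-Lipschitz equivalence of bounded-degree algebraic (or semialgebraic) sets: since all the sets in every $X_F$ with $F\in P^{k}(n,p)$ live in the fixed ambient $\R^n\times\R^p$ and are defined by polynomials of degree $\le k$ (the flag parts having degree $1$), the Valette theorem gives only finitely many bi-Lipschitz types, and moreover the trivializing homeomorphisms can be taken to respect the whole configuration simultaneously. Concretely, one applies Valette to the family parametrized by the coefficients of $F$, obtaining a finite stratification of the parameter space into pieces over which the configuration $X_F$ is bi-Lipschitz trivial; on each such piece, for any two parameters $F_i,F_j$ the trivialization supplies the homeomorphism $H$ satisfying (i) and (ii). Hence $\mathcal{F}$ has finitely many multi-$\mathcal{V}$-bi-Lipschitz classes.

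Finally I would apply Theorem~\ref{Teo123} verbatim: if $X_{F_i}$ and $X_{F_j}$ are multi-$\mathcal{V}$-bi-Lipschitz equivalent via $H$, then $F_i$ and $F_j$ are (Lipschitz map germs, being polynomial, so the hypotheses are met) multi-$\mathcal{K}$-bi-Lipschitz equivalent. Therefore the map $F\mapsto X_F$ descends to a surjection from multi-$\mathcal{V}$-classes onto multi-$\mathcal{K}$-classes of $P^{k}(n,p)$, and finiteness of the former forces finiteness of the latter. The corollary then follows from the remark that multi-$\mathcal{K}$-bi-Lipschitz equivalence of a $(p,(1,\dots,1))$-multi pair implies ordinary $\mathcal{K}$-bi-Lipschitz equivalence of the underlying map germ, so the $\mathcal{K}$-classes are a further quotient of the multi-$\mathcal{K}$-classes.

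The main obstacle I anticipate is not the logical skeleton but the precise invocation of Valette's theorem: one must be sure that the version cited controls a \emph{family} of several algebraic sets at once (equivalently, a single set together with a fixed flag of linear subspaces) with a \emph{common} bi-Lipschitz trivialization, and that the degree bound on $\mathrm{graph}(F)$ translates into the boundedness hypothesis Valette requires --- a uniform bound on the complexity of the semialgebraic family as the coefficients of $F$ vary over $P^{k}(n,p)$. Once that is set up correctly, the reduction via Theorem~\ref{Teo123} is immediate and the rest is bookkeeping.
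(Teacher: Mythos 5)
Your proposal is correct and follows the paper's own route: finiteness of the multi-$\mathcal{K}$-bi-Lipschitz classes (obtained from Valette's theorem together with Theorem~\ref{Teo123}) plus the observation that a multi-$\mathcal{K}$-bi-Lipschitz equivalence of the coordinate-function family assembles into an $H=(h,\tilde H)$ satisfying Definition~\ref{defKbil}, so the $\mathcal{K}$-classes are a coarser partition. The only difference is that you re-derive the Finiteness Theorem inside the argument, whereas the paper simply cites it and devotes the corollary's proof to the last assembling step.
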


\begin{proof}
Let $F,G \in P^{k}(n,p)$. By Remark \ref{remark}, $F$ and $G$ can be
considered a $(p,q)$-multi pair of function germs (through its
coordinate functions). Since multi-$\mathcal{K}$-bi-Lipschitz
equivalence admits the finiteness property, we can suppose that the
corresponding families of coordinate functions are
multi-$\mathcal{K}$-bi-Lipschitz equivalent. Then one can consider a
map $H : ( \mathbb{R}^n \times \mathbb{R}^p , 0) \rightarrow (
\mathbb{R}^n \times \mathbb{R}^p ,0)$ defined as follows:
$$
H=(h,\tilde{H}),
$$
\noindent where $h$ and $\tilde{H}=(\tilde{H}_1, \dots,
\tilde{H}_p)$ such that $h$ and $\tilde{H}_i$ are the maps obtained
from the corresponding multi-$\mathcal{K}$-bi-Lipschitz equivalence
of the coordinate functions. Then $h$ and $H$ are bi-Lipschitz
homeomorphisms and satisfy the conditions of Definition
\ref{defKbil} for $F$ and $G$. Hence, $F$ and $G$ are
$\mathcal{K}$-bi-Lipschitz equivalent.

Since in the proof we use the finiteness property of the
multi-$\mathcal{K}$-bi-Lipschitz equivalence we have that the number
of $\mathcal{K}$-bi-Lipschitz classes is also finite.
\end{proof}

\end{document}